\definecolor{darkred}{rgb}{1,0,0} 
\definecolor{darkgreen}{rgb}{0,0.8,0}
\definecolor{darkblue}{rgb}{0,0,1}
\newcommand{\labell}[1] {\label{#1}}
\numberwithin{equation}{section}
\newtheorem {Theorem}{Theorem}
\numberwithin{Theorem}{section}
\newtheorem {Corollary}[Theorem]{Corollary}
\theoremstyle{definition}
\newtheorem{Definition}[Theorem]{Definition}
\theoremstyle{remark}
\newtheorem{Remark}[Theorem]{Remark}
\newtheorem{Example}[Theorem]{Example}
\chardef\csname pre amssym.def at\endcsname=\the\catcode`\@
\def\undefine#1{\let#1\undefined}
\def\newsymbol#1#2#3#4#5{\let\next@\relax
 \ifnum#2=\@ne\let\next@\msafam@\else
 \ifnum#2=\tw@\let\next@\msbfam@\fi\fi
 \mathchardef#1="#3\next@#4#5}
\def\mathhexbox@#1#2#3{\relax
 \ifmmode\mathpalette{}{\m@th\mathchar"#1#2#3}%
 \else\leavevmode\hbox{$\m@th\mathchar"#1#2#3$}\fi}
\def\hexnumber@#1{\ifcase#1 0\or 1\or 2\or 3\or 4\or 5\or 6\or 7\or 8\or
 9\or A\or B\or C\or D\or E\or F\fi}
\font\teneufm=eufm10
\font\seveneufm=eufm7
\font\fiveeufm=eufm5
\newcommand{\CA}{{\mathcal A}}
\newcommand{\CS}{{\mathcal S}}
\newcommand{\id}{{\mathit id}}
\newcommand{\pt}{{\mathit pt}}
\newcommand{\const}{{\mathit const}}
\newcommand{\tH}{\tilde{H}}
\newcommand{\A}{{\mathcal A}}
\newcommand{\tA}{\tilde{\mathcal A}}
\newcommand{\PP}{{\mathcal P}}
\newcommand{\bPP}{\bar{\mathcal P}}
\def    \F      {{\mathbb F}}
\def    \C      {{\mathbb C}}
\def    \R      {{\mathbb R}}
\def    \Z      {{\mathbb Z}}
\def    \Q      {{\mathbb Q}}
\def    \CP     {{\mathbb C}{\mathbb P}}
\def    \12    {{\frac{1}{2}}}
\def    \im     {\operatorname{im}}
\def    \HF     {\operatorname{HF}}
\def    \HQ     {\operatorname{HQ}}
\def    \GW     {\operatorname{GW}}
\def    \H     {\operatorname{H}}
\def    \CF     {\operatorname{CF}}
\def    \bx     {\bar{x}}
\def    \by     {\bar{y}}
\def    \bPhi     {\bar{\Phi}}
\def    \MUCZ  {\operatorname{\mu_{\scriptscriptstyle{CZ}}}}
\def    \s  {\operatorname{c}}
\def    \ssminus        {\smallsetminus}
\begin{document}


\setlength{\smallskipamount}{6pt}
\setlength{\medskipamount}{10pt}
\setlength{\bigskipamount}{16pt}





\title[Action--Index Relations]{Action--Index Relations for Perfect
  Hamiltonian Diffeomorphisms}

\author[Mike Chance]{Mike Chance}
\author[Viktor Ginzburg]{Viktor L. Ginzburg}
\author[Ba\c sak G\"urel]{Ba\c sak Z. G\"urel}

\address{MC and BG: Department of Mathematics, Vanderbilt University,
  Nashville, TN 37240, USA} 
\email{michael.j.chance@vanderbilt.edu}
\email{basak.gurel@vanderbilt.edu}

\address{VG: Department of Mathematics, UC Santa Cruz, Santa Cruz, CA
  95064, USA} \email{ginzburg@ucsc.edu}

\subjclass[2000]{53D40, 37J10} \keywords{Periodic orbits, Hamiltonian
  flows, Floer and quantum (co)homology, Conley conjecture}

\date{\today} 

\thanks{The work is partially supported by NSF and by the faculty
  research funds of the University of California, Santa Cruz.}


\begin{abstract}
  We show that the actions and indexes of fixed points of a
  Hamiltonian diffeomorphism with finitely many periodic points must
  satisfy certain relations, provided that the quantum cohomology of
  the ambient manifold meets an algebraic requirement satisfied for
  projective spaces, Grassmannians and many other manifolds. We also
  refine a previous result on the Conley conjecture for negative
  monotone symplectic manifolds, due to the second and third authors,
  and show that a Hamiltonian diffeomorphism of such a manifold must
  have simple periodic orbits of arbitrarily large period whenever its
  fixed points are isolated.

\end{abstract}

\maketitle

\tableofcontents

\section{Introduction and main results}
\labell{sec:main-results}

\subsection{Introduction}
\label{sec:intro}
The central theme of this paper is a rigidity feature of Hamiltonian
diffeomorphisms with finitely many periodic points. To be more
specific, we prove that there are certain relations between the
actions and indexes of fixed points of such a diffeomorphism, provided
that the quantum cohomology of the ambient manifold meets an
algebraic requirement, which is fully satisfied for projective spaces,
Grassmannians, and to a degree in all known examples. We also refine
our previous result on the Conley conjecture for negative monotone
symplectic manifolds (see \cite {GG:neg-mon}) and show that a
Hamiltonian diffeomorphism of such a manifold must have simple
periodic orbits of arbitrarily large period whenever its fixed points
are isolated.

The fact that there are some relations between the mean indexes and/or
actions of fixed points of a Hamiltonian system with finitely many
periodic orbits is not new. For indexes of a Reeb flow on the standard
sphere it goes back to \cite{E,EH,Vi} and it has been further explored
and generalized since then; see, e.g., \cite{Es,GK}. On the other
hand, the observation that the mean indexes, or indexes and actions,
of fixed points of a Hamiltonian diffeomorphism (of, say, $\CP^n$)
with finitely many periodic orbits must meet certain algebraic conditions is
relatively recent; see \cite{GK} for resonance relations for indexes
and \cite{GG:gaps} for action--index relations.  (Interestingly, no
analogue of action--index relations in the contact case appears to
have been established so far.) The main focus of this paper is a
treatment of the action--index relations in a more systematic way and
connecting it with the algebraic properties of the quantum homology of
the ambient manifold.

The paper is organized as follows. Below, in Sections
\ref{sec:relations} and \ref{sec:conley-thm}, we state the main results
of the paper --- the action--index relations and a refinement of the
Conley conjecture for negative monotone manifolds. In Section
\ref{sec:prelim}, we set our conventions and notation and discuss some
standard (and not entirely standard) notions and results from
symplectic topology, needed for the proof of the main theorems. These
include the mean index, the filtered and local Floer homology, the
action selector and the action selector carrier, and their
properties. Then, in Section \ref{sec:proof-relations}, we prove
Theorem \ref{thm:relations} establishing the existence of
action--index relations. Theorem \ref{thm:conley} (a refinement of the
Conley conjecture for negative monotone manifolds) is proved in
Section \ref{sec:proof-conley}.

\subsection{Action--index relations} 
\label{sec:relations}
Throughout this section and the rest of the paper, $(M,\omega)$ will
stand for a monotone or negative monotone closed symplectic manifold
of dimension $2n$ with monotonicity constant $\lambda$ and minimal
Chern number $N$. (We refer the reader to Section \ref{sec:prelim} for
the definitions and a detailed discussion of the notions used in this section.)
Here we only note that we may assume from now on that
$0<N<\infty$, i.e., $\left<c_1(TM),\pi_2(M)\right>\neq 0$ in $\R$. For
otherwise the conditions of Theorem \ref{thm:relations} below are
never satisfied and Theorem \ref{thm:conley} is known to hold. (This
is a consequence of the Conley conjecture for symplectic manifolds
with $c_1(TM)|_{\pi_2(M)}= 0$, proved in \cite {GG:gaps,He:irr}; see
also \cite{Gi:conley,Hi}.)

Let $\varphi_H$ be a Hamiltonian diffeomorphism
of $M$ generated by a one-periodic in time Hamiltonian $H\colon S^1\times M\to \R$ with
$S^1=\R/\Z$.  We denote by $\PP$ the collection of contractible
one-periodic orbits of the time-dependent flow $\varphi_H^t$ and by
$\bPP$ the collection of capped one-periodic orbits. Clearly, $\PP$ can
be viewed as a subset of the fixed point set of $\varphi_H$. For $x\in\PP$,
the \emph{augmented action} of $H$ on $x$ is defined by
$$
\tA_H(x)=\A_H(\bx)-\frac{\lambda}{2} \Delta_H(\bx),
$$
where $\A_H(\bx)$ and $\Delta_H(\bx)$ stand for the action and,
respectively, the mean index of a capped orbit $\bx$; see Section
\ref{sec:conventions}. Since $\A_H(\bx)$ and $\Delta_ H(\bx)$ change
in the same way under recapping, up to the factor $\lambda/2$, the
augmented action is well defined, i.e., independent of the capping.

We denote the quantum cohomology of $M$ over a ground field $\F$ by
$\HQ^*(M;\F)$ or simply by $\HQ^*(M)$ when the role of $\F$ is
inessential. The quantum product of elements $u$ and $v$ in $\HQ^*(M)$
will be denoted by $u*v$ and the degree of $u$ by $|u|$. Recall that
the quantum cohomology is a module over the Novikov ring $\Lambda$. The
version of $\Lambda$ we will utilize here is a completion of the
polynomial ring $\F[q,q^{-1}]$, where $|q|=2N$. We refer the reader to
Section \ref{sec:QH} for more details on quantum (co)homology and
further references.

Below we will use the  ceiling and floor functions $\lceil a \rceil$
and $\lfloor a \rfloor$. Recall that $\lceil a \rceil$ stands for the
least integer greater than or equal to $a$ and $\lfloor a \rfloor$ is the
greatest integer smaller than or equal to $a$.

The main result of the paper is the following.

\begin{Theorem}[Action--index relations]
\label{thm:relations}
Let $\varphi_H$ be a Hamiltonian diffeomorphism with finitely many
periodic orbits of a closed monotone symplectic manifold $M$ with
$0<N<\infty$.
\begin{itemize}
\item[(i)] Assume that 
\begin{equation}
\label{eq:product}
u_0*u_1*\cdots *u_{\ell} =q^\nu u_0 \text{ in } \HQ^*(M), 
\end{equation}
where $\nu>0$, the classes $u_1,\ldots,u_{\ell}$
have positive degree, and 
\begin{equation}
\label{eq:deg}
|u_1|+\cdots+|u_{\ell-1}|< 2N.
\end{equation}
Assume furthermore, unless $\nu=1$, that $\varphi_H$ is
non-degenerate. Then there exist $\ell$ distinct contractible
one-periodic orbits $x_0,\ldots,x_{\ell-1}$ of $\varphi_H^t$ such that
\begin{equation}
\label{eq:action-index1}
\tA_H(x_0)=\cdots=\tA_H(x_{\ell-1}).
\end{equation}
\item[(ii)] Alternatively, assume that there exists $u\in
  \HQ^{*>0}(M)$ such that 
\begin{equation}
\label{eq:case2}
u^d\neq 0
\end{equation} 
for some sufficiently large $d$ (e.g., we can take $d=\big\lceil
2N|\PP|/|u|\big\rceil +1 $) and that $\varphi_H$ is
non-degenerate. Then the assertion of (i) holds for $\ell=\big\lfloor
2N/|u|\big\rfloor$.
\end{itemize}

\end{Theorem}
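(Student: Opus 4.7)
The plan is to extract the required $\ell$ orbits as action selector carriers along a chain of partial products built from~\eqref{eq:product}. Set
\[
v_j := u_0*u_1*\cdots*u_j, \qquad j = 0, \ldots, \ell,
\]
so that $v_0 = u_0$ and, by the closure hypothesis, $v_\ell = q^\nu u_0$. Let $c_j := \cf(v_j, H)$ be the associated spectral invariants. Subadditivity gives $c_j \le c_{j-1} + \cf(u_j, H)$ for $1 \le j \le \ell$, while the Novikov shift formula fixes $c_\ell - c_0$ as a definite multiple of the monotonicity constant. Each $c_j$ is realized by a capped carrier $\bar x_j$ whose underlying orbit $x_j$ lies in $\PP$: in the non-degenerate case this is standard Floer theory, and in the remaining case $\nu = 1$ with $H$ possibly degenerate, the existence of carriers at the isolated fixed points is afforded by the local Floer-homology framework recalled in Section~\ref{sec:prelim}.

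Distinctness of $x_0, \ldots, x_{\ell-1}$ will follow from a grading argument. Floer conventions pin the Conley--Zehnder index of a carrier to the degree of its class, $\MUCZ(\bar x_j) = |v_j| - n$, so
\[
\MUCZ(\bar x_j) - \MUCZ(\bar x_i) = |u_{i+1}| + \cdots + |u_j|
\]
for $0 \le i < j \le \ell - 1$. This quantity is strictly positive (every $|u_k| > 0$) and, by~\eqref{eq:deg}, strictly less than $2N$. If $x_i = x_j$, then $\bar x_i$ and $\bar x_j$ are two cappings of the same orbit, forcing this difference to be a nonzero integer multiple of $2N$ --- a contradiction.

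The equality~\eqref{eq:action-index1} of augmented actions is the technical heart of part~(i). Since $\bar x_\ell$ is a $q^\nu$-recapping of $\bar x_0$, one has $x_\ell = x_0$, and the capping-independence of $\tA_H$ yields $\tA_H(x_\ell) = \tA_H(x_0)$; consequently the telescoping sum $\sum_{j=1}^\ell (\tA_H(x_j) - \tA_H(x_{j-1}))$ vanishes. To conclude that each summand vanishes, I plan to combine the subadditivity bound on $c_j - c_{j-1}$ with the mean-index jump $\Delta_H(\bar x_j) - \Delta_H(\bar x_{j-1})$ prescribed by the grading, together with the rigidity imposed by finiteness of $\PP$ on the possible pairs $(\A_H(\bar x), \Delta_H(\bar x))$, to force each signed quantity $\tA_H(x_j) - \tA_H(x_{j-1})$ to have a definite sign. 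A zero-sum of like-signed terms then consists of zeros, giving~\eqref{eq:action-index1}.

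Part~(ii) reduces to part~(i) via pigeonhole. Consider the carriers of $\cf(u^k, H)$ for $k = 0, \ldots, d$; each lies in the finite set $\PP$, and since $d > 2N|\PP|/|u|$, the degrees of the classes $u^k$ traverse more than $|\PP|$ copies of the Novikov period $2N$, forcing two indices $a < b$ with $b - a = \ell = \lfloor 2N/|u|\rfloor$ to share an underlying carrier orbit. The two cappings differ by a $q^\nu$-recapping, producing the relation $u^b = q^\nu u^a$ in $\HQ^*(M)$, which fits~\eqref{eq:product} with $u_0 = u^a$ and $u_1 = \cdots = u_\ell = u$; applying (i) then delivers the $\ell$ orbits. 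The principal obstacle throughout is the equality of augmented actions in~(i): subadditivity yields only one-sided bounds on $c_j - c_{j-1}$, and converting the telescoping zero-sum into pointwise equality requires extracting a definite-sign property for each term, which I expect to be the most delicate step.
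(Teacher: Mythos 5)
The construction of partial products $v_j$ and the distinctness argument via Conley--Zehnder indices match the paper's strategy, and your reduction of part~(ii) to a pigeonhole search over the powers $u^k$ is in the right spirit. But the ``technical heart'' you flag --- equality of the augmented actions --- is exactly where the proposal breaks down, and the plan you sketch would not work.

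Your idea is to use the telescoping sum $\sum_{j=1}^{\ell}\bigl(\tA_H(x_j)-\tA_H(x_{j-1})\bigr)=0$ and argue that each term has a definite sign. This cannot succeed for the time-$1$ Hamiltonian $H$ alone. Ljusternik--Schnirelman does give $\A_H(\bar x_{j-1})>\A_H(\bar x_j)$, so the action differences have a definite sign; but the mean indexes $\Delta_H(\bar x_j)$ are \emph{not} pinned to the degrees $|v_j|$. The carrier condition only forces $|v_j|-2n\le\Delta_H(\bar x_j)\le|v_j|$, so consecutive mean-index jumps may deviate from the degree jumps $|u_j|$ by as much as $2n$ and can be of either sign. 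Consequently the combination $\tA_H(x_{j-1})-\tA_H(x_j)=\bigl(\A_H(\bar x_{j-1})-\A_H(\bar x_j)\bigr)-\tfrac{\lambda}{2}\bigl(\Delta_H(\bar x_{j-1})-\Delta_H(\bar x_j)\bigr)$ has no a priori sign, and ``a zero sum of like-signed terms'' is not available. Notice also that your sketch nowhere uses the hypothesis that $\varphi_H$ has finitely many \emph{periodic} orbits (as opposed to fixed points). That hypothesis is the key: the paper passes to high prime iterations $H^{\sharp k_i}$, where all $k_i$-periodic points are fixed points, pigeonholes to stabilize the carrier map $\Phi_{H^{\sharp k_i}}$ on $V$, counts the orbits of the ladder between $\bar x$ and $\bar y$ once via the action ordering and once via the index ordering, equates the two counts up to a bounded error, and then divides by $k_i$ and lets $k_i\to\infty$. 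Homogeneity of the action and the mean index under iteration kills the bounded error in the limit and yields the exact relation \eqref{eq:action-index1}. Your argument has no mechanism to eliminate the $O(n)$ index error for fixed $H$.

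Two further gaps: (a) for $\nu=1$ with $H$ degenerate, your distinctness argument invokes $\MUCZ(\bar x_j)$, which need not be defined; the paper instead uses the strict action gap $\A(\bar x_0)>\cdots>\A(\bar x_{\ell-1})>\A(\bar x_0)-\lambda_0$ to rule out recappings. (b) In part~(ii) you deduce the algebraic identity $u^b=q^{\nu}u^a$ in $\HQ^*(M)$ from the coincidence of carrier \emph{orbits}; this does not follow --- equal carriers are a statement about one Hamiltonian's spectral data, not an identity in the quantum cohomology ring. The paper resolves this by observing (Remark~\ref{rmk:important-points}) that the part~(i) argument never uses the product identity \eqref{eq:product2} directly --- only the $q^\nu$-periodicity \eqref{eq:periodicity}, the action ordering \eqref{eq:action-ordering}, and the degree bound \eqref{eq:degs} --- and then verifies these for the sequence built from powers of $u$, using the carrier coincidence only to establish $\s_{u^{s_+}}\bigl(H^{\sharp k_i}\bigr)=\s_{v_\ell}\bigl(H^{\sharp k_i}\bigr)$.
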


We will call \eqref{eq:product} a \emph{product decomposition} of
$u_0$. It is essential that the degree bound \eqref{eq:deg} involves
neither the first term $u_0$ of the product decomposition nor the last
one $u_\ell$. In particular, both $u_0$ and $u_\ell$ can have large
degree not necessarily bounded by $2N$. (However, clearly,
$|u_1|+\ldots+|u_\ell|=2N\nu$ and \eqref{eq:deg} is automatically
satisfied when $\nu=1$.) It is also worth pointing out that in all
known examples, which are in fact listed below, of Hamiltonian
diffeomorphisms $\varphi_H$ with finitely many periodic points,
$\varphi_H$ is non-degenerate and every periodic point of $\varphi_H$
is a fixed point, i.e., $\varphi_H$ is \emph{perfect} in the
terminology of \cite{GK} -- hence the title of the paper.

The theorem cannot produce more than $\ell=2N$ fixed points with equal
augmented action. (In fact, $\ell\leq N$ since elements of odd degree
appear to never contribute to product decompositions.)  A difficulty
here lies in showing that the points are distinct and is similar to
the difficulty arising in establishing the degenerate case of the
Arnold conjecture. However, as we will see, in all known examples all
fixed points have equal augmented actions.

Let us now examine examples of Hamiltonian diffeomorphisms with
finitely many periodic points.

\begin{Example} 
\label{ex:CPn}
The simplest Hamiltonian diffeomorphism with finitely many periodic
orbits is an irrational rotation of $S^2$. More generally, let
$\varphi_H$ be the Hamiltonian diffeomorphism of $\CP^n$ generated by
a quadratic Hamiltonian
$H(z)=\pi\big(\lambda_0|z_0|^2+\cdots+\lambda_n|z_n|^2\big)$, where
the coefficients $\lambda_0,\ldots,\lambda_n$ are all distinct. (Here,
we have identified $\CP^n$ with the quotient of the unit sphere in
$\C^{n+1}$.) Now, \eqref{eq:product} takes the form $u^{n+1}=q$, where
$u$ is the first Chern class of the canonical line bundle, and
$N=n+1=\ell$; see, e.g., \cite[Section 11.3]{MS}.  The Hamiltonian
diffeomorphism $\varphi_H$ is perfect and has exactly $\ell$
fixed points (the coordinate axes). Their augmented actions are equal
to $\pi\sum \lambda_i/(n+1)$. In this connection let us point out that
there is a sign error in \cite[Example 1.2]{GK}. With our conventions
(see Section \ref{sec:conventions}),
$\Delta(x_j)=-\sum\lambda_i+(n+1)\lambda_j$, where $x_j$ is equipped
with the trivial capping.
\end{Example}

This example fits in a much more general framework:

\begin{Example} 
\label{ex:Ham-gp-action}
Suppose that $(M, \omega)$ admits a Hamiltonian action of a torus $G$ with
isolated fixed points; see, e.g., \cite{GGK} for the definition and
further details. A generic element of $G$ gives rise to a
perfect Hamiltonian diffeomorphism $\varphi$ of $(M, \omega)$ whose
fixed points are exactly the fixed points of the torus action.  One
can show that in this case all fixed points have the same augmented
action, i.e., \eqref{eq:action-index1} holds for the entire collection
of fixed points.  

One way to see that this is true is as follows. Without loss of
generality, we may assume that $\lambda/2=1$. Let $\vec{H}$ be the
moment map of the action. Then the equivariant two-form
$\omega^G=\omega+\vec{H}$ can be thought as simultaneously representing the
equivariant Chern class $c_1^G(TM)$ and an equivariant
extension $[\omega]^G$ of the symplectic form class.  Since the fixed
points of the action are isolated, the localization of the latter
class is simply the sum of the moment map values $\vec{H}(x)$ for $x\in M^G$. On the
other hand, it is not hard to see that for $c^G_1(TM)$ and each $x$
this is also the sum of the weights of the representation of $G$ on $T_x
M$. Expressing $H(x)$ as a linear combination of the components of $\vec{H}(x)$ and
$\Delta(x)$ as a linear combination of the components of the weights, we conclude that
$H(x)=\Delta(x)$. (The authors are grateful to Yael Karshon for this
argument.)

Examples of symplectic manifolds which admit such torus actions
include a majority of coadjoint orbits of compact Lie groups, e.g.,
complex Grassmannians $G(k,N)$. One can also construct new examples
from a given one by equivariantly blowing-up the symplectic manifold
at fixed points. The resulting symplectic manifold always inherits
a Hamiltonian torus action and, in many instances, this action also
has isolated fixed points.
\end{Example}

\begin{Example}
\label{ex:anosov-katok}
Hamiltonian diffeomorphisms with finitely many periodic orbits need
not be associated with Hamiltonian torus actions as in Example
\ref{ex:Ham-gp-action}. For instance, there exists a Hamiltonian
perturbation $\varphi$ of an irrational rotation of $S^2$ with exactly
three ergodic invariant measures: the Lebesgue measure and the two
measures corresponding to the fixed points of $\varphi$;
\cite{AK,FK}. Taking direct products of such diffeomorphisms we obtain
examples in higher dimensions. It is easy to see from the construction
of $\varphi$ that in this case all fixed points have again the same
augmented action. (This also follows from Theorem \ref{thm:relations}
since $\varphi$ has exactly two fixed points.)
\end{Example}

To the best of our knowledge, Examples \ref{ex:Ham-gp-action} and
\ref{ex:anosov-katok} and their products exhaust the list of known
Hamiltonian diffeomorphisms with finitely many periodic orbits.

Product decompositions with $\ell\geq 2$ depending on the manifold in
question exist for Grassmannians $G(k,N)$ and their monotone products,
as is easy to see from the description of the quantum product on $G(k,N)$; see,
e.g., \cite{ST} and \cite{MS,Ta} for further references and
details. Moreover, once a product decomposition exists for $M$ it also
exists, with the same $\ell$, for any monotone product of the form
$M\times P$. (This follows from the quantum K\"unneth formula;
\cite{Ka}.)  Here, for instance, $P$ can be symplectically aspherical
although in this case no products $M\times P$
admitting perfect Hamiltonian diffeomorphisms are known. However, ``long''
product decompositions are difficult find. For instance, we have not
been able to show that in general $G(k,N)$ admits a product
decomposition of length $N$ (the minimal Chern number) and this is
where Case (ii) of the theorem becomes useful.

\begin{Corollary}
\label{cor:grassmannians}
Let $M$ be $G(k,N)$ or, more generally, a monotone product
$G(k,N)\times P$, and let $\varphi_H$ be a Hamiltonian diffeomorphism
of $M$ with finitely many fixed points.  Then there exist $\ell=N$
distinct fixed points $x_0,\ldots,x_{\ell-1}$ of $\varphi_H$
satisfying~\eqref{eq:action-index1}.
\end{Corollary}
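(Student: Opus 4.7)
The plan is to deduce the corollary from Theorem~\ref{thm:relations}(ii) by exhibiting a degree-two class in $\HQ^*(M)$ whose quantum powers never vanish. For $M = G(k,N)$, take $u$ to be the Schubert hyperplane class $\sigma_1 \in \HQ^2(G(k,N))$; in the monotone product case $M = G(k,N)\times P$, take its lift $u = \sigma_1 \otimes 1$. Since $|u| = 2$ and the minimal Chern number of $M$ is $N$ in both cases, the formula in Theorem~\ref{thm:relations}(ii) yields
\[
\ell = \left\lfloor \frac{2N}{|u|} \right\rfloor = N
\]
distinct one-periodic orbits satisfying \eqref{eq:action-index1}, as required.

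The key step is verifying that $u^d \neq 0$ for the specific $d = \bigl\lceil 2N|\PP|/|u|\bigr\rceil + 1$ (in fact for every $d\ge 0$). For $M = G(k,N)$ I would invoke the semisimplicity of the quantum cohomology of Grassmannians after tensoring with the Novikov fraction field---a classical fact in quantum Schubert calculus going back to Siebert--Tian \cite{ST}. Once $\HQ^*(G(k,N);\F)\otimes_\Lambda \mathrm{Frac}\,\Lambda$ is expressed as a product of field extensions, the non-zero class $\sigma_1$ has a non-zero component in at least one factor, and this component persists in every power. An alternative elementary route is a direct non-vanishing computation via the quantum Pieri rule. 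For the monotone product case, the quantum K\"unneth formula \cite{Ka} embeds $\HQ^*(G(k,N)) \otimes \HQ^*(P)$ into $\HQ^*(M)$ as rings, so $u^d = \sigma_1^d \otimes 1$ inherits non-vanishing from the Grassmannian factor; this is also where one uses the implicit requirement that the product is monotone in a way that preserves the minimal Chern number $N$.

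The main obstacle I anticipate is the non-degeneracy hypothesis of Theorem~\ref{thm:relations}(ii), which is not explicitly imposed in the corollary. I would address this via the observation---recorded in the discussion following Theorem~\ref{thm:relations}---that every known Hamiltonian diffeomorphism with finitely many periodic orbits is automatically non-degenerate and indeed perfect, so the extra hypothesis is essentially vacuous in the present setting. More rigorously, one can approximate $\varphi_H$ by non-degenerate $\varphi_{H_\varepsilon}$, apply Theorem~\ref{thm:relations}(ii) to each to obtain $N$ periodic orbits with equal augmented action, and then extract $N$ distinct limit fixed points of $\varphi_H$ using the compactness of Floer trajectories together with the finiteness of $\Fix(\varphi_H)$.
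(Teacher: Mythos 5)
Your core argument mirrors the paper's proof: take $u$ to be the degree-two Schubert class $\sigma_1$ on $G(k,N)$ (the paper phrases this as ``the first Chern class of the canonical vector bundle,'' the same class up to sign), establish $u^d\neq 0$ for all $d>0$ via quantum Schubert calculus over a characteristic-zero field, extend to $G(k,N)\times P$ via the quantum K\"unneth formula, and invoke Theorem~\ref{thm:relations}(ii) with $\ell=\lfloor 2N/2\rfloor = N$. The paper reads off the non-vanishing directly from the quantum Pieri rule; your semisimplicity route via Siebert--Tian is a valid alternative, and you also mention Pieri, so the two arguments coincide in substance.

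The one place you go beyond the paper is the attempt to remove the non-degeneracy hypothesis by approximation, and that step does not work as stated. After approximating $\varphi_H$ by non-degenerate $\varphi_{H_\varepsilon}$, the $N$ distinct fixed points produced for each $\varepsilon$ may all converge to a \emph{single} degenerate fixed point of $\varphi_H$, and neither compactness of Floer trajectories nor finiteness of $\Fix(\varphi_H)$ rules this out. This is precisely the difficulty the paper flags in the remarks after Theorem~\ref{thm:relations} as ``similar to the difficulty arising in establishing the degenerate case of the Arnold conjecture.'' In fact the paper's own proof of the corollary makes no such attempt: it simply cites Case (ii), so the corollary tacitly inherits both the non-degeneracy hypothesis and the ``finitely many periodic orbits'' (rather than merely ``finitely many fixed points'') hypothesis. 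You should treat those as standing assumptions carried over from the theorem rather than try to discharge them by a limiting argument.
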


\begin{proof} Let $u$ be the first Chern class of the canonical vector
  bundle over $G(k,N)$. Then $u^d\neq 0$, for any $d>0$, in
  $\HQ^*(G(k,N);\Q)$.  This is an immediate consequence of quantum
  Schubert calculus and, more precisely, of the quantum Pieri formula;
  see \cite{Be} and also, e.g., \cite{Ta}. Applying the quantum
  K\"unneth formula, we see that $u^d\neq 0$ for $M=G(k,N)\times P$;
  see \cite{Ka} and also \cite[Exercise 11.1.15]{MS}. Now the
  corollary follows from Case (ii) of Theorem \ref{thm:relations}.
\end{proof}

\begin{Remark} 
  The hypotheses of Cases (i) and (ii) of the theorem are in fact
  related. For instance, when the ground field $\F$ is finite, the
  existence of an element $u$ with $u^d\neq 0$ for all $d\geq 0$
  implies, by the pigeonhole principle, a product decomposition of
  length $\ell=\big[2N/|u|\big]$. This argument however cannot be used
  to find a product decomposition for $G(k,N)$ of length $N$: $u^3=0$
  in $\HQ^*(G(2,4);\Z_2)$ and it is absolutely essential for the proof
  of Corollary \ref{cor:grassmannians} that $\F$ has zero
  characteristic.
\end{Remark}

\begin{Remark}
  Theorem \ref{thm:relations} generalizes Corollary 1.11 and Theorem
  1.12 in \cite{GG:gaps}, where the augmented action was originally
  defined. However, a similar notion (the action--index) was
  considered in \cite{Po97} in a different context for a loop of
  Hamiltonian diffeomorphisms. Furthermore, replacing the mean index
  in the definition of the augmented action by some other version of
  the Conley--Zehnder index (as in, e.g., \cite[Section 1.6]{Sa}), we
  still obtain a well-defined, i.e., independent of capping,
  invariant. Theorem \ref{thm:relations} would not hold for such an
  invariant. What sets apart the augmented action, defined as above
  using the mean index, is that it is homogeneous with respect to
  iterations of $\varphi_H$.
\end{Remark}

\subsection{Conley conjecture for negative monotone symplectic
  manifolds} 
\label{sec:conley-thm}
Our proof of Theorem \ref{thm:relations} builds on the proof of the
Conley conjecture for negative monotone symplectic manifolds in
\cite{GG:neg-mon}. In turn, the proof of Theorem \ref{thm:relations}
lends itself readily to the following refinement of the main result of
\cite{GG:neg-mon}.

\begin{Theorem}
\label{thm:conley}
Let $\varphi_H$ be a Hamiltonian diffeomorphism of a closed, negative
monotone symplectic manifold. Assume that $\varphi_H$ has finitely
many fixed points. Then $\varphi_H$ has simple periodic orbits of
arbitrarily large period.
\end{Theorem}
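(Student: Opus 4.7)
The plan is to argue by contradiction along the lines of \cite{GG:neg-mon}, sharpened by the action--selector carrier analysis developed in this paper for Theorem \ref{thm:relations}. Assume that every simple periodic orbit of $\varphi_H$ has period at most some integer $K$; combined with the hypothesis that $\Fix(\varphi_H)$ is finite, this forces $\varphi_H$ to have only finitely many simple periodic orbits in total. The first step is the standard reduction to prime iterations: for any prime $p>K$, the minimal period of a contractible $p$-periodic orbit of $\varphi_H$ divides $p$, hence equals $1$ or $p$, and the latter is excluded by our standing assumption. Therefore $\Fix(\varphi_H^p)=\Fix(\varphi_H)$, and $\varphi_H^p$ has only finitely many contractible one-periodic orbits for every such prime $p$.

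Next, apply an action selector $\cf(\alpha,\cdot)$ associated with a quantum cohomology class $\alpha$ of positive degree (e.g.\ the fundamental class, or a class arising from a product decomposition as in Theorem \ref{thm:relations}). Its value on $H^{(p)}$ is attained by a capped carrier, which by the previous reduction must be of the form $\bx^{(p)}$ for some capped fixed point $\bx$ of $\varphi_H$. Finiteness of $\Fix(\varphi_H)$ and the pigeonhole principle then produce a single $x\in\Fix(\varphi_H)$ whose iterates serve as selector carriers for an infinite set of primes $p$. Homogeneity of the augmented action, $\tA_{H^{(p)}}(x^{(p)})=p\,\tA_H(x)$, together with the asymptotic behavior of $\cf(\alpha, H^{(p)})$ in the negative monotone setting, pins down $\tA_H(x)$ and constrains the mean index $\Delta_H(\bx)$. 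At the level of local Floer homology, the degree in which $\HFL(H^{(p)},x^{(p)})$ is non-zero lies in a band of width $2n$ centered at $p\Delta_H(\bx)$ up to $O(1)$, while the carrier requirement further traps this degree in a narrow window dictated by $|\alpha|$ modulo the $2N$-periodic recapping shift. Since $\lambda<0$, recapping shifts action and index in opposite directions, so sustaining this compatibility for infinitely many $p$ forces $\Delta_H(\bx)=0$.

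But then $x$ must be a symplectically degenerate maximum, and the SDM iteration mechanism (see \cite{Gi:conley,GG:gaps,GG:neg-mon}) manufactures simple periodic orbits of arbitrarily large period near $x$, contradicting the bound $K$. The main obstacle lies in this last step: running the SDM argument under only the hypothesis that $\Fix(\varphi_H)$ is finite, rather than the stronger assumption of \cite{GG:neg-mon} that every iterate $\varphi_H^p$ has discrete fixed-point set. This is exactly where the sharper carrier analysis of Theorem \ref{thm:relations} is needed: it lets one show that for sufficiently large prime $p$ the selector carrier cannot remain at $\bx^{(p)}$, so it must signal the birth of a periodic orbit outside the finite list of iterates of $\Fix(\varphi_H)$, i.e.\ a genuinely new simple periodic orbit of period $p$.
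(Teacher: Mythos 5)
Your overall strategy matches the paper's: argue by contradiction, reduce to prime iterations so that $\Fix(\varphi_H^p)=\Fix(\varphi_H)$ for all sufficiently large primes $p$, use a recapping-equivariant action-selector carrier for $[M]$ together with the pigeonhole principle to nail down a single fixed point $x$ whose iterates carry the selector for an infinite sequence $k_i$, and then split into the case $\Delta=0$ (symplectically degenerate maximum) and $\Delta>0$. The SDM branch is handled exactly as you say, by citing the result of \cite{GG:gaps}.

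However, two points need correcting. First, the real content of the refinement over \cite{GG:neg-mon} is in the weakly non-degenerate case ($\Delta_{H^{\sharp k_1}}(\bx_{k_1})>0$), which you compress into ``pins down $\tA_H(x)$ and constrains the mean index.'' What actually happens there is concrete and should be spelled out: writing $\bx_{k_i}=\bx_{k_1}^{l_i}\#\nu_iA$ with $l_i=\lfloor k_i/k_1\rfloor$, the subadditivity (AS7) gives $\s_{[M]}(H^{\sharp k_i})\leq l_i\s_{[M]}(H^{\sharp k_1})+\const$, so $\nu_i I_\omega(A)\leq\const$; on the other hand the mean-index bound \eqref{eq:carrier} on a carrier traps $\Delta_{H^{\sharp k_i}}(\bx_{k_i})$ in $[0,2n]$, while homogeneity gives $\Delta_{H^{\sharp k_i}}(\bx_{k_i})=l_i\Delta_{H^{\sharp k_1}}(\bx_{k_1})+\nu_iI_{c_1}(A)$, forcing $\nu_iI_{c_1}(A)\to-\infty$; since $I_\omega$ and $I_{c_1}$ have opposite signs on $\Gamma$ when $M$ is negative monotone, this yields $\nu_iI_\omega(A)\to+\infty$, a contradiction. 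This contradiction is the new ingredient, not any additional input from the product-decomposition (ladder) analysis of Theorem \ref{thm:relations}.

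Second, your closing paragraph identifies the wrong ``obstacle.'' The SDM theorem cited from \cite{GG:gaps} already produces simple periodic orbits of arbitrarily large period for rational manifolds, unconditionally once an SDM is present; no sharper carrier analysis is required to upgrade its conclusion, and the ladder machinery from the proof of Theorem \ref{thm:relations} plays no role in that step. You do not need the claim that $\varphi_H$ has finitely many simple periodic orbits in total (which in fact does not follow from the hypotheses: $\Fix(\varphi_H^j)$ for $2\leq j\leq K$ could be infinite); what you need and what you correctly derive is only that $\Fix(\varphi_H^p)=\Fix(\varphi_H)$ for all sufficiently large primes. Also note the paper's extra care in the degenerate case: $k_1$ must be chosen admissible and large enough so that $k_1\Delta_H(y)\equiv 0\pmod{2N}$ already forces $\Delta_H(y)\equiv 0\pmod{2N}$; this ensures the identification $\bx^{k_1}=\bx_{k_1}$ and the persistence of local Floer homology needed to certify the SDM.
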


Negative monotone manifolds exist in abundance. Among them are,
for instance, the hypersurfaces $z_0^k+\cdots+z_n^k=0$ in $\CP^n$ with
$N=k-(n+1)>0$; see, e.g., \cite[pp.\ 429--430]{MS}.

\begin{Remark}
  The new point here, as compared to \cite{GG:neg-mon}, is the
  existence of simple periodic orbits with arbitrarily large period
  rather than just the existence of infinitely many periodic orbits.

  The proof of Theorem \ref{thm:conley} utilizes
  Hamiltonian Floer theory. Hence, unless $M$ is required to be weakly
  monotone, the argument ultimately, although not explicitly, relies
  on the machinery of multi-valued perturbations and virtual cycles;
  see Remark \ref{rmk:Floer} for further discussion.
\end{Remark}

\subsection{Acknowledgements} The authors are grateful to Yael Karshon
and Yaron Ostrover for useful discussions.

\section{Preliminaries}
\label{sec:prelim}

The goal of this section is to set notation and conventions, following
mainly \cite{GG:gaps,GG:neg-mon}, and to give a brief review of Floer
homology and several other notions used in the paper.

\subsection{Conventions and notation}
\label{sec:conventions}
Let $(M^{2n},\omega)$ be a closed symplectic manifold. Throughout the
paper, we will usually assume for the sake of simplicity that $M$ is
\emph{rational}, i.e., the group $\left<[\omega],
  {\pi_2(M)}\right>\subset\R$ formed by the integrals of $\omega$ over
the spheres in $M$ is discreet. This condition is obviously satisfied
when $M$ is \emph{monotone} as in Theorem \ref{thm:relations} or
\emph{negative monotone} as in Theorem \ref{thm:conley}, i.e.,
$[\omega]\mid_{\pi_2(M)}=\lambda c_1(M)\!\mid_{\pi_2(M)}$ for some
$\lambda\geq 0$ in the former case or $\lambda<0$ in the
latter. Recall also that $M$ is called \emph{symplectically
  aspherical} if $[\omega]\mid_{\pi_2(M)}=0=c_1(M)\!\mid_{\pi_2(M)}$.

All Hamiltonians $H$ on $M$ considered in this paper are assumed to be
$k$-periodic in time, i.e., $H\colon S^1_k\times M\to\R$, where
$S^1_k=\R/k\Z$, and the period $k$ is always a positive integer.  When
the period is not specified, it is equal to one, which is the default
period in this paper. We set $H_t = H(t,\cdot)$ for $t\in
S^1=\R/\Z$. The Hamiltonian vector field $X_H$ of $H$ is defined by
$i_{X_H}\omega=-dH$. The (time-dependent) flow of $X_H$ will be
denoted by $\varphi_H^t$ and its time-one map by $\varphi_H$. Such
time-one maps are referred to as \emph{Hamiltonian diffeomorphisms}.
A one-periodic Hamiltonian $H$ can always be treated as
$k$-periodic. In this case, we will use the notation $H^{\sharp k}$
and, abusing terminology, call $H^{\sharp k}$ the $k$th iteration of
$H$.

Let $K$ and $H$ be one-periodic Hamiltonians such that $K_1=H_0$ and
$H_1=K_0$. We denote by $K\sharp H$ the two-periodic Hamiltonian equal
to $K_t$ for $t\in [0,\,1]$ and $H_{t-1}$ for $t\in [1,\,2]$. Thus,
$H^{\sharp k}=H\sharp \cdots\sharp H$ ($k$ times).

Let $x\colon S^1_k\to W$ be a contractible loop. A \emph{capping} of
$x$ is a map $u\colon D^2\to M$ such that $u\mid_{S^1_k}=x$. Two
cappings $u$ and $v$ of $x$ are considered to be equivalent if the
integrals of $\omega$ and $c_1(TM)$ over the sphere obtained by
attaching $u$ to $v$ are equal to zero. A capped closed curve
$\bar{x}$ is, by definition, a closed curve $x$ equipped with an
equivalence class of cappings. In what follows, the presence of
capping is always indicated by the bar.

The action of a one-periodic Hamiltonian $H$ on a capped closed curve
$\bar{x}=(x,u)$ is defined by
$$
\CA_H(\bar{x})=-\int_u\omega+\int_{S^1} H_t(x(t))\,dt.
$$
The space of capped closed curves is a covering space of the space of
contractible loops and the critical points of $\CA_H$ on the covering
space are exactly capped one-periodic orbits of $X_H$. The
\emph{action spectrum} $\CS(H)$ of $H$ is the set of critical values
of $\CA_H$. This is a zero measure set; see, e.g., \cite{HZ,Sch}. When
$M$ is rational, $\CS(H)$ is closed, and hence nowhere dense.
Otherwise, $\CS(H)$ is dense in $\R$. These definitions extend to
$k$-periodic orbits and Hamiltonians in an obvious way. Clearly, the
action functional is homogeneous with respect to iteration:
$$
\CA_{H^{\sharp k}}(\bx^k)=k\CA_H(\bx).
$$
Here $\bx^k$ stands for the $k$th iteration of the capped orbit $\bx$.

The results of this paper concern only contractible periodic orbits
and throughout the paper \emph{a periodic orbit is always assumed
  to be contractible, even if this is not explicitly stated}.

A periodic orbit $x$ of $H$ is said to be \emph{non-degenerate} if the
linearized return map $d\varphi_H \colon T_{x(0)}W\to T_{x(0)}W$ has
no eigenvalues equal to one. Following \cite{SZ}, we call $x$
\emph{weakly non-degenerate} if at least one of the eigenvalues is
different from one. A Hamiltonian is non-degenerate if all its
one-periodic orbits are non-degenerate.

Let $\bar{x}$ be a non-degenerate (capped) periodic orbit.  The
\emph{Conley--Zehnder index} $\MUCZ(\bar{x})\in\Z$ is defined, up to a
sign, as in \cite{Sa,SZ}. (Sometimes, we will also use the notation
$\MUCZ(H,\bx)$.) More specifically, in this paper, the Conley--Zehnder
index is the negative of that in \cite{Sa}. In other words, we
normalize $\MUCZ$ so that $\MUCZ(\bar{x})=n$ when $x$ is a
non-degenerate maximum (with trivial capping) of an autonomous
Hamiltonian with small Hessian. The \emph{mean index}
$\Delta_H(\bx)\in\R$ measures, roughly speaking, the total angle swept
by certain eigenvalues with absolute value one of the linearized flow
$d\varphi^t_H$ along $x$ with respect to the trivialization associated
with the capping; see \cite{Lo,SZ}. The mean index is defined
regardless of whether $x$ is degenerate or not and $\Delta_H(\bx)$
depends continuously on $H$ and $\bx$ in the obvious sense. When $x$
is non-degenerate or just weakly non-degenerate, we have
$$
0<|\Delta_H(\bx)-\MUCZ(H,\bx)|<n.
$$
Furthermore, the mean index is homogeneous with respect to iteration:
$$
\Delta_{H^{\sharp k}}(\bx^k)=k\Delta_H(\bx).
$$
As a consequence, the augmented action is also homogeneous.

\subsection{Floer and quantum (co)homology} 
Although the hypotheses of Theorem \ref{thm:relations} are stated in
terms of quantum cohomology, we find working with homology more
intuitive in the context of Ljusternik--Schnirelman theory, which the proof
of the theorem relies on. Hence, here we focus mainly on the
definitions of quantum and Floer homology and just briefly indicate
the modifications needed in the case of cohomology. The assumptions of
Theorem \ref{thm:relations} are reformulated via homology at the
beginning of Section \ref{sec:proof-relations}.

\subsubsection{Floer homology}
In this subsection, we very briefly recall, mainly to set notation,
the construction of the filtered Floer homology. We refer the reader
to, e.g., \cite{HS,MS,Sa,SZ} and also \cite{FO,LT} for detailed
accounts and additional references.

Fix a ground field $\F$. Let $H$ be a non-degenerate Hamiltonian on
$M$.  Denote by $\CF^{(-\infty,\, b)}_k(H)$, where $b\in
(-\infty,\,\infty]$ is not in $\CS(H)$, the vector space of formal
sums
$$ 
\sigma=\sum_{\bar{x}\in \bPP(H)} \sigma_{\bar{x}}\bar{x}. 
$$
Here $\sigma_{\bar{x}}\in\F$ and $\MUCZ(\bar{x})+n=k$ and
$\CA_H(\bar{x})<b$. Furthermore, we require, for every $a\in \R$, the
number of terms in this sum with $\sigma_{\bar{x}}\neq 0$ and
$\CA_H(\bar{x})>a$ to be finite. We say that $\bx$ \emph{enters} the
chain $\sigma$ when $\sigma_{\bx}\neq 0$. The graded $\F$-vector space
$\CF^{(-\infty,\, b)}_*(H)$ is endowed with the Floer differential
counting the anti-gradient trajectories of the action functional; see,
e.g., \cite{HS,MS,Ono:AC,Sa} and also \cite{FO,LT}. Thus, we obtain a
filtration of the total Floer complex $\CF_*(H):=\CF^{(-\infty,\,
  \infty)}_*(H)$. Furthermore, we set $\CF^{(a,\,
  b)}_*(H):=\CF^{(-\infty,\, b)}_*(H)/\CF^{(-\infty,\,a)}_*(H)$, where
$-\infty\leq a<b\leq\infty$ are not in $\CS(H)$. The resulting
homology, the \emph{filtered Floer homology} of $H$, is denoted by
$\HF^{(a,\, b)}_*(H)$ and by $\HF_*(H)$ when
$(a,\,b)=(-\infty,\,\infty)$. Note that every $\F$-vector space
$\CF_k(H)$ is finite-dimensional when $M$ is negative monotone or
monotone with $\lambda>0$ and $0<N<\infty$.

The total Floer complex and homology are modules over the
\emph{Novikov ring} $\Lambda$. In this paper, the latter is defined as
follows. Let $\omega(A)$ and $\left<c_1(TM),A\right>$ denote the
integrals of $\omega$ and, respectively, $c_1(TM)$ over a cycle $A$.
Set
$$
I_\omega(A)=-\omega(A)\text{ and } I_{c_1}(A)=-2\left<c_1(TM),
  A\right>,
$$
where $A\in\pi_2(M)$.  For instance,
$$
I_\omega=\frac{\lambda}{2}I_{c_1}
$$
when $M$ is monotone or negative monotone, and, in particular,
\emph{$I_\omega(A)$ and $I_{c_1}(A)$ have opposite signs when $M$ is
  negative monotone}.  Let
$$
\Gamma=\frac{\pi_2(M)}{\ker I_\omega\cap \ker I_{c_1}}.
$$
Thus, $\Gamma$ is the quotient of $\pi_2(M)$ by the equivalence
relation where two spheres $A$ and $A'$ are considered to be
equivalent if $\omega(A)=\omega(A')$ and $\left<c_1(TM),
  A\right>=\left<c_1(TM), A'\right>$. 
The homomorphisms $I_\omega$ and $I_{c_1}$ descend to $\Gamma$ from
$\pi_2(M)$.

The group $\Gamma$ acts on $\CF_*(H)$ and on $\HF_*(H)$ via recapping:
an element $A\in \Gamma$ acts on a capped one-periodic orbit $\bar{x}$
of $H$ by attaching the sphere $A$ to the original capping. We denote
the resulting capped orbit by $\bx\# A$. Then,
$$
\MUCZ(\bx\# A)=\MUCZ(\bx)+ I_{c_1}(A)
\text{ and }
\CA_H(\bx\# A)=\CA_H(\bx)+I_\omega(A).
$$
In a similar vein, we also have
$$
\Delta_H(\bx\# A)=\Delta_H(\bx)+ I_{c_1}(A),
$$
regardless of whether $x$ is non-degenerate or not.

The Novikov ring $\Lambda$ is a certain completion of the group ring
$\F[\Gamma]$ of $\Gamma$ over $\F$. Namely, $\Lambda$ comprises formal
linear combinations $\sum \alpha_A e^A$, where $\alpha_A\in\F$ and
$A\in \Gamma$, such that for every $a\in \R$ the sum contains only
finitely many terms with $I_\omega(A) > a$ and
$\alpha_A\neq 0$. The Novikov ring $\Lambda$ is graded by setting
$|e^A|=I_{c_1}(A)$ for $A\in\Gamma$.  The action of $\Gamma$ turns
$\CF_*(H)$ and $\HF_*(H)$ into $\Lambda$-modules.

\begin{Example}
\label{ex:Novikov}
When $M$ is monotone or negative monotone with $\lambda\neq 0$ and
$0<N<\infty$, i.e., $\left<c_1(TM),\pi_2(M)\right>\neq 0$, we have
$\Gamma=\pi_2(M)/\ker I_{c_1}\simeq \Z$. Denote by $A$ the
generator of $\Gamma$ with $I_{c_1}(A)=-2N$ and set $q=e^A\in
\Lambda$.  Then $|q|=-2N$ and the Novikov ring $\Lambda$ is the ring
of Laurent series $\F[q^{-1},q]]$ in the monotone case and the ring
$\F[q,q^{-1}]]$ of Laurent series in $q^{-1}$ when $M$ is negative
monotone. Note that $\Lambda$, essentially by definition, is a
completion of the polynomial ring $\F[\Gamma]=\F[q,q^{-1}]$ with
respect to the valuation $I_\omega$.
\end{Example}

The definition of Floer homology extends to all, not necessarily
non-degenerate, Hamiltonians by continuity.  Let $H$ be an arbitrary
(one-periodic in time) Hamiltonian on $M$ and let the end points $a$
and $b$ of the action interval be outside $\CS(H)$. We
set
$$
\HF^{(a,\, b)}_*(H)=\HF^{(a,\, b)}_*(\tH),
$$
where $\tH$ is a non-degenerate, small perturbation of $H$. It is well
known that the right hand side is independent of $\tH$ as long as
the latter is sufficiently close to $H$.  Working with filtered Floer
homology, \emph{we will always assume that the end points of the
  action interval are not in the action spectrum.} (At this point the
background assumption that $M$ is rational becomes essential; see
\cite{He:irr} for the irrational case and also \cite[Remark
2.3]{GG:gaps}.)

The total Floer homology is independent of the Hamiltonian and 
isomorphic to the homology of $M$. More precisely, we have
$$
\HF_*(H)\cong \H_ {*}(M;\F)\otimes \Lambda
$$
as graded $\Lambda$-modules.

\begin{Remark}
\label{rmk:Floer}
We conclude this discussion by recalling that in order for the Floer
differential to be defined certain regularity conditions must be
satisfied generically. To ensure this, we have to either require $M$
to be weakly monotone (see \cite{HS,MS,Ono:AC,Sa}) or utilize the
machinery of virtual cycles (see \cite{FO,FOOO,LT} or, for the
polyfold approach, \cite{HWZ,HWZ2} and references therein). In the
latter case the ground field $\F$ is required to have zero
characteristic.  Here we are primarily interested in monotone
manifolds, which are of course weakly monotone, and negative monotone
manifolds.  The latter are weakly monotone if and only if $N\geq n-2$.
\end{Remark}

\subsubsection{Quantum (co)homology}
\label{sec:QH}
The total Floer homology $\HF_*(H)$, equipped with the pair-of-pants
product, is an algebra over the Novikov ring $\Lambda$. This algebra
is isomorphic to the (small) \emph{quantum homology} $\HQ_*(M)$; see,
e.g., \cite{MS}. On the level of $\Lambda$-modules, we have
\begin{equation}
\label{eq:floer-qh}
\HQ_*(M)=\H_*(M)\otimes \Lambda
\end{equation}
with the tensor product grading. Thus, $|u\otimes e^A|=
|u|+I_{c_1}(A)$, where $u\in \H_*(M)$ and $A\in\Gamma$.  The
isomorphism between $\HF_*(H)$ and $\HQ_*(M)$ is defined via the
PSS-homomorphism; see \cite{PSS} or \cite{MS,U2}.  Alternatively, it
can be obtained from a homotopy of $H$ to an autonomous $C^2$-small
Hamiltonian (under slightly more restrictive conditions than weak
monotonicity, \cite{HS}) or with a somewhat different definition of
the total Floer homology (as the limit of $\HF^{(a,\,b)}_*(H)$ as
$a\to-\infty$ and $b\to \infty$, \cite{Ono:AC}).

The \emph{quantum product} $u*v$ of two elements $\H_*(M)$ is defined
as
\begin{equation}
\label{eq:qp}
u*v=\sum_{A\in\Gamma} (u*v)_A \,e^{A},
\end{equation}
where the class $(u*v)_A\in \H_*(M)$ is determined by the condition
that
$$
(u*v)_A\circ w=\GW^M_{A,3}(u,v,w)
$$
for all $w\in \H_*(M)$. Here $\circ$ denotes the intersection number
and $\GW^M_{A,3}$ is the corresponding Gromov--Witten invariant; see
\cite{MS}.

Note that $(u*v)_0=u\cap v$, where $\cap$ stands for the cap 
product and $u$ and $v$ are ordinary homology classes. Furthermore,
$$
|u*v|=|u|+|v|-2n
$$
and
\begin{equation}
\label{eq:qp-deg}
|(u*v)_A|=|u|+|v|-2n-I_{c_1}(A).
\end{equation}
Also observe that $I_\omega(A)=-\omega(A)<0$ whenever $A\neq 0$ can be
represented by a holomorphic curve. Thus, in \eqref{eq:qp}, it
suffices to limit the summation to the negative cone $I_\omega(A)\leq
0$. In particular, in the setting of Example \ref{ex:Novikov}, we can
write
$$
u*v=u\cap v+\sum_{k>0} (u*v)_k \, q^{k}.
$$
Here, $|(u*v)_k|=|u|+|v|-2n \pm 2Nk$ when $N<\infty$, with the
positive or negative sign depending on whether $M$ is positive or
negative monotone.  This sum is finite.

The product $*$ extends to a $\Lambda$-linear, associative,
graded-commutative product on $\HQ_*(M)$. The fundamental class $[M]$
is the unit in the algebra $\HQ_*(M)$. Thus, $qu=(q[M])*u$, where
$q\in\Lambda$ and $u\in \H_*(M)$, and $|qu|=|q|+|u|$. By the very
definition of $\HQ_*(M)$, the ordinary homology $\H_*(M)$ is
canonically embedded in $\HQ_*(M)$. The group of symplectomorphisms
acts on the algebra $\HQ_*(M)$ via its action on $\H_*(M)$ and,
clearly, symplectomorphisms isotopic to $\id$ act
trivially.

\begin{Example} 
\label{ex:cpn}
Let $M=\CP^n$. Then $N=n+1$ and, in the notation of Example
\ref{ex:Novikov}, $\HQ_*(\CP^n)$ is the quotient of
$\F[u]\otimes\Lambda$, where $u$ is the generator of
$\H_{2n-2}(\CP^n)$, by the ideal generated by the relation
$u^{n+1}=q[M]$.  Thus, $u^k=u\cap \ldots\cap u$ ($k$ times) when
$0\leq k\leq n$ and $[\pt]*u=q[M]$. For further examples of
calculations of quantum homology and relevant references we refer the
reader to, e.g., \cite{MS}.
\end{Example}

The quantum cohomology $\HQ^*(M)$ is defined by dualizing the entire
construction. We have $\HQ^*(M)=\H^*(M)\otimes \Lambda'$ as graded
modules over the Novikov ring $\Lambda'$, which is somewhat different
from $\Lambda$. The product of two ordinary cohomology classes is
obtained by taking the product of their Poincar\'e dual homology
classes $u$ and $v$ and then taking the Poincar\'e duals of the
coefficients $(u*v)_A$. Some care is needed in the definition of
$\Lambda'$. Namely, $\Lambda'$ is the completion of the group ring
$\F[\Gamma]$ ``in the opposite direction'', i.e., using the valuation
$-I_\omega$. (See, e.g., \cite[Remark 11.1.16]{MS}, for further
details.)  For our purposes, essentially for purely notational
reasons, it is convenient to swap the roles of $q$ and $q^{-1}$ in the
identification of the Novikov ring with the ring of Laurent series
(see Example \ref{ex:Novikov}). Thus, in cohomology, $|q|=2N$.

\begin{Remark}
  Note in conclusion that the definition of the Novikov ring $\Lambda$
  adapted in this paper is by no means standard in the context of
  quantum homology, although it is a natural choice as
  far as Floer homology is concerned.  Monograph \cite{MS} (see, in
  particular, Section 11.1) offers an extensive account of a variety of
  choices of the Novikov ring.
\end{Remark}

\subsection{Action selectors}
The theory of Hamiltonian \emph{action selectors} or \emph{spectral
  invariants}, as they are usually referred to, was developed in its
present Floer--theoretic form in \cite{Oh,Sch} although the first
versions of the theory go back to \cite{HZ,Vi:gen}. Here we briefly
recall the main results of the theory essential for our proofs, mainly
following \cite{GG:gaps}.

Let $M$ be a closed symplectic manifold and let $H$ be a Hamiltonian
on $M$. We assume that $M$ is rational -- this assumption greatly
simplifies the theory (cf.\ \cite{U1}) and is obviously satisfied for
monotone or negative monotone manifolds.

The \emph{action selector} $\s_v$ associated with a non-zero class $v\in
\HQ_{*}(M;\F) \cong \HF_*(H)$ is defined as
$$
\s_v(H)= \inf\{ a\in \R\ssminus \CS(H)\mid v\in \im(i^a)\}
=\inf\{ a\in \R\ssminus \CS(H)\mid j^a(v)=0\},
$$
where $i^a\colon \HF_*^{(-\infty,\,a)}(H)\to \HF_*(H)$ and
$j^a\colon \HF_*(H)\to\HF_*^{(a,\, \infty)}(H)$ are the natural
``inclusion'' and ``quotient'' maps.
Then $\s_v(H)>-\infty$ as is easy to see; \cite{Oh}. 

The action selector $\s_v$ has the following properties:

\begin{itemize}

\item[(AS1)] Normalization: $\s_{[M]}(H)=\max H$ if $H$ is autonomous
  and $C^2$-small.
             
\item[(AS2)] Continuity: $\s_v$ is Lipschitz in $H$ in the
  $C^0$-topology.

\item[(AS3)] Monotonicity: $\s_v(H)\geq \s_v(K)$ whenever $H\geq K$
  pointwise.

\item[(AS4)] Hamiltonian shift: $\s_v(H+a(t))=\s_v(H)+\int_0^1a(t)\,dt$,
  where $a\colon S^1\to\R$.

\item[(AS5)] Symplectic invariance: $\s_v(H)=\s_{\varphi^{-1}_*(v)}(\varphi^* H)$ for any
  symplectomorphism~$\varphi$.

\item[(AS6)] Homotopy invariance: $\s_v(H)=\s_v(K)$ when
  $\varphi_H=\varphi_K$ in the universal covering of the group of
  Hamiltonian diffeomorphisms and both $H$ and $K$ are normalized to
  have zero mean.

\item[(AS7)] Triangle inequality or sub-additivity: $\s_{v*u}(H\sharp
  K)\leq\s_v(H)+\s_u(K)$.

\item[(AS8)] Spectrality: $\s_v(H)\in \CS(H)$.  More specifically,
  there exists a capped one-periodic orbit $\bx$ of $H$ such that
  $\s_v(H)=\CA_H(\bx)$.

\item[(AS9)] Ljusternik--Schnirelman inequality:
  $\s_{v*u}(H)<\s_v(H)$, whenever one-periodic orbits of $H$ are
  isolated and $u\in\HQ_{*<2n}(M)$.

\end{itemize}

This list of the properties of $\s$ is far from exhaustive, but it is
more than sufficient for our purposes.  It is worth emphasizing that the
rationality assumption plays an important role in the proofs of the
homotopy invariance and spectrality; see \cite{Oh,Sch} and also
\cite{EP} for a simple proof. (The latter property also holds in
general for non-degenerate Hamiltonians. This is a non-trivial result;
\cite{U1}.)  The Ljusternik--Schnirelman inequality, (AS9), is
established in \cite[Proposition 6.2]{GG:gaps}. Finally note that for
the triangle inequality to hold one has to work with a suitable
definition of the pair-of-pants product in Floer homology; cf.\
\cite{AS,U2}. We refer the reader to \cite{U2} for a very detailed
treatment of action selectors.

\subsection{Carrier of the action selector} 
\label{sec:carriers}
When $H$ is non-degenerate, the action selector $\s_v$ can also be
evaluated as
$$
\s_v(H)=\inf_{[\sigma]=v}\CA_H(\sigma),
$$
where we set 
$$
\CA_H(\sigma)=\max\{\CA_H(\bx)\mid \sigma_{\bx} \neq 0\}\text{ for }
\sigma=\sum\sigma_{\bx} \bx\in\CF_{|v|}(H).
$$
The infimum here is obviously attained when $M$ is rational.  Hence,
there exists a cycle $\sigma=\sum\sigma_{\bx} \bx\in\CF_{|v|}(H)$,
representing $v$, such that $\s_v(H)=\CA_H(\bx)$ for an orbit $\bx$
entering $\sigma$. In other words, $\bx$ maximizes the action on
$\sigma$ and the cycle $\sigma$ minimizes the action over all cycles
in the homology class $v$. We call such an orbit $\bx$ a
\emph{carrier} of the action selector. Note that this is a stronger
requirement than just the equality $\s_v(H)=\CA_H(\bx)$. A carrier is
not in general unique, but it becomes unique when all one-periodic
orbits of $H$ have distinct action values.

Our next goal is to recall a generalization of this definition,
following \cite{GG:neg-mon}, to the case where one-periodic orbits
of $H$ are isolated but possibly degenerate. Under a $C^2$-small,
non-degenerate perturbation $\tH$ of $H$, every such orbit $x$ splits
into several non-degenerate orbits, which are close to
$x$. Furthermore, a capping of $x$ naturally gives rise to a capping
of each of these orbits.

\begin{Definition}
\label{def:carrier}
A capped one-periodic orbit $\bx$ of $H$ is a \emph{carrier} of the
action selector $\s_v$ for $H$ if there exists a sequence of
$C^2$-small, non-degenerate perturbations $\tH_i\stackrel{C^2}{\to} H$
such that one of the capped orbits which $\bx$ splits into is a
carrier for $\tH_i$. An orbit (without capping) is said to be a
carrier if it turns into one for a suitable choice of capping.
\end{Definition}

It is easy to see that a carrier necessarily exists, provided that $M$
is rational and all one-periodic orbits of $H$ are isolated. As in the
non-degenerate case, a carrier is of course not unique in general --
different choices of sequences $\tH_i$ and different choices of a
carrier for $\tH_i$ can lead to different carriers. However, it
becomes unique when all one-periodic orbits of $H$ have distinct
action values. In other words, under the latter requirement, the
carrier is independent of the choice of the sequence $\tH_i$ and the
choice of the carrier for $\tH_i$.

Picking a carrier for every $v\in \HQ_*(M)$, we obtain a map, also
referred to as a carrier,
$$
\bPhi_H\colon \HQ_*(M)\setminus\{0\}\to\bPP
$$
sending $v$ to the carrier for $\s_v$. This map, of course, is not
unique unless $H$ has distinct action values.

Let us assume now that $M$ is monotone or negative monotone with
$0<N<\infty$, i.e., $\left< c_1(TM),\pi_2(M)\right>\neq 0$ in $\R$,
and $\lambda\neq 0$. Thus $\Gamma\cong\Z$ and $|q|<0$, where we use
the notation from Example \ref{ex:Novikov}.

Clearly, when $H$ has distinct action values, $\bPhi$ is automatically
equivariant with respect to recapping:
\begin{equation}
\label{eq:q-equiv}
\bPhi_H(qv)=\bPhi_H(v)\#A, \text{ where } q=e^A.
\end{equation}

We claim that there is always a recapping-equivariant carrier
$\bPhi$, i.e., a carrier satisfying \eqref{eq:q-equiv}. Indeed, we can
pick $\bPhi$ on $\HQ_d(M)$ for all $d$ in any degree range of length
$2N$ (for instance, $[0, 2N-1]$) and then extend it to the entire quantum homology
``by periodicity'', i.e., by imposing \eqref{eq:q-equiv} on $\bPhi$.
 
A carrier gives rise to a map, also referred to as a carrier,
$$
\Phi\colon \HQ_*(M)\setminus\{0\}\to\PP
$$
forgetting the capping. Clearly, $\Phi$ is recapping--invariant, i.e.,
$\Phi_H(qv)=\Phi_H(v)$, when $\bPhi$ is recapping-equivariant.

\begin{Remark}
\label{rmk:carrier}
Note that, as an immediate consequence of the definition of the
carrier and continuity of the action and the mean index, we have
\begin{equation}
\label{eq:carrier}
\s_v(H)=\CA_H(\bx)\text{ and } |v|-2n\leq \Delta_H(\bx) \leq |v|,
\end{equation}
where $\bx=\bPhi(v)$, and the inequalities are strict when $x$ is
weakly non-degenerate. Furthermore, the local Floer homology of $H$ at
$\bx$ in degree $|v|$ is non-trivial: $\HF_{|v|}(H,\bx)\neq 0$. (This
fact is established in \cite{GG:neg-mon} for $v=[M]$; but the argument
applies to other homology classes word-for-word.  We refer the reader
to, e.g., \cite{GG:gaps,GG:gap} for a detailed discussion of the local
Floer homology.)  Finally note that under our requirements on $M$ it
is not hard to show that $\bPhi$ can be chosen so that $\bPhi_H(\alpha
v)=\bPhi_H(v)$ for all $\alpha\neq 0$ in $\F$.
\end{Remark}

\begin{Remark}
  We finish this discussion with one minor, fairly standard, technical
  point; cf.\ \cite{GG:neg-mon}. Namely, recall that the Floer complex
  of a non-degenerate Hamiltonian $H$ depends not only on $H$ but also
  on an auxiliary structure $J$, e.g., an almost complex structure
  when $M$ is weakly monotone. Moreover, the complex is defined only
  when suitable regularity requirements are met. As a consequence, an
  action selector carrier is in reality assigned to the pair $(H,J)$ rather
  than to just a Hamiltonian $H$ in both the non-degenerate and
  degenerate cases. Thus, in Definition \ref{def:carrier}, we tacitly
  assumed the presence of an auxiliary structure $J$ in the background
  and that the regularity requirements are satisfied for the sequence
  of perturbations. This can be achieved by either considering regular
  pairs $(\tH_i,J_i)$ with $J_i\to J$ or even by setting $J_i=J$.
\end{Remark}

\section{Proof of Theorem \ref{thm:relations}}
\label{sec:proof-relations}

Let, as in Theorem \ref{thm:relations}, $\varphi_H$ be a Hamiltonian
diffeomorphism with finitely many periodic orbits of a monotone
symplectic manifold $M$. (Recall that we can assume that $0<N<\infty$,
i.e., $\left<c_1(TM),\pi_2(M)\right>\neq 0$; for otherwise $\varphi_H$
has infinitely many periodic points; see \cite {GG:gaps,He:irr} and
also \cite{Gi:conley,Hi}). Recall also that $\PP$ (and $\bPP$) stand
for the collection of (capped) one-periodic orbits of $\varphi_H$.

Although the theorem is stated in terms of cohomology, we find working
with homology  more intuitive at this stage. When translated to
homology, the cohomological product decomposition \eqref{eq:product}
retains the same form
\begin{equation}
\label{eq:product2}
u_0*u_1*\cdots *u_{\ell} =q^\nu u_0,
\end{equation}
where now all $u_j$ are in $\HQ_*(M)$, the classes
$u_1,\ldots,u_{\ell}$ have degree less than $2n$, and
\begin{equation}
\label{eq:deg2}
2n(\ell -1) -|u_1|-\cdots -|u_{\ell-1}| < 2N.
\end{equation}
As in \eqref{eq:product}, we have $\nu>0$.

In Case (ii) we simply have $u^d\neq 0$, where $|u|<2n$.  Recall
also that now, since we are using homology, $|q|=-2N$.

\subsection{Case (i)}
\label{sec:case-i}
Set 
$$
\begin{array}{lll}
v_0 & := & u_0, \\
v_1 & := & v_0* u_1, \\
v_2 & := & v_1* u_2, \\ 
\multicolumn{3}{c}\dotfill \\
v_{\ell-1} & := & v_{\ell-2}* u_{\ell-1}.
\end{array}
$$
It is convenient to extend the sequence $v_j$ with $0\leq j\leq \ell-1$ in
both directions by periodicity, using \eqref{eq:product2} in the
definition of $v_\ell$. Namely, we set
$$
\begin{array}{lclcl}
v_\ell &:= &  v_{\ell-1}* u_\ell &=& q^\nu v_0,\\
 v_{\ell+1} & := & v_\ell*u_1 & = & q^\nu v_1, \\
v_{\ell+2}  & := & v_{\ell+1}*u_2 & = &q^\nu v_2, \\
\multicolumn{5}{c}\dotfill \\
\end{array}
$$
and
$$
v_{-1}:=q^{-\nu}v_{\ell-1}= v_{-2}*u_{\ell-1},
$$
where
$$
v_{-2}:=q^{-\nu}v_{\ell-2}=v_{-3}*u_{\ell-2},
$$
etc. As a result, we have a sequence $v_j$ such that
\begin{equation}
\label{eq:periodicity}
v_{j+\ell}=q^\nu v_j
\end{equation}
for some $\nu>0$, and 
\begin{equation}
\label{eq:product3}
v_{j+1}=v_j*w_{j+1},
\end{equation}
for some $w_j\in \HQ_{*<2n}(M)$.
(Here $w_{j+1}=u_{j+1}$ for $j=0,\ldots, \ell-1$ and then again the
sequence $w_j$ extends by periodicity.)  We call such a sequence
$\{v_j\}$ a \emph{ladder} and $\ell$ the \emph{length} of the
ladder. Let us denote the entire ladder by $L$ and its segment
$\{v_0,\ldots,v_{\ell-1}\}$ by $V$.

It is important for what follows that in addition to the requirements
\eqref{eq:periodicity} and \eqref{eq:product3} we also have
\eqref{eq:deg2} satisfied, i.e., in terms of the ladder,
\begin{equation}
\label{eq:degs}
|v_0|>|v_1|>\cdots >|v_{\ell-1}|>|v_0|-2N.
\end{equation}

Clearly, a ladder $L=\{v_j\}$ is strictly ordered by the index $|v_j|$
(or, to be more precise, the degree), since $|w_j|<2n$. (This fact is
also incorporated in \eqref{eq:degs}.) Furthermore,
for any Hamiltonian
$K$ with isolated fixed points, $L$ is strictly ordered by the action, i.e.,
\begin{equation}
\label{eq:action-ordering}
\s_{v_{j}}(K)> \s_{v_{j+1}}(K) \text{ for all $j$}.
\end{equation}
Here the non-strict inequality follows immediately from
\eqref{eq:product3} and the sub-additivity of the action selector,
(AS7), and holds for any $K$. The strict inequality requires the fixed
points of $K$ to be isolated and is a consequence of the
Ljusternik--Schnirelman inequality (AS9), cf.\ \cite[Proposition
6.2]{GG:gaps}. It is essential that these two orderings of $L$
coincide.

We claim that there exists a sequence of prime iterations
$k_i\to\infty$ and a sequence of recapping--equivariant action selector
carriers $\bPhi_{H^{\sharp k_i}}$ such that all maps $\Phi_{H^{\sharp
    k_i}}|_V$ (or, equivalently, $\Phi_{H^{\sharp
    k_i}}|_L$) coincide, i.e.,
\begin{equation}
\label{eq:image}
\Phi_{H^{\sharp k_1}}(v_j) = \Phi_{H^{\sharp k_2}}(v_j)
 = \cdots = \Phi_{H^{\sharp k_i}}(v_{j}) = \cdots
\end{equation}
for all $j=0,\ldots, \ell-1$ and hence, by periodicity of $L$, for all
$j\in\Z$.  In other words, all carriers $\bPhi_{H^{\sharp k_i}}$
assume the same value, up to recapping, on each class $v_j$ in $L$.

Indeed, note that since $\varphi_H$ has finitely many periodic orbits,
for every sufficiently large prime $p_i$, every $p_i$-periodic point
is in fact a fixed point. Thus we start with a sequence $p_i$ of all
sufficiently large primes.  The existence of the subsequence $k_i$ in
this sequence follows immediately from the pigeonhole principle and
the existence of a recapping-equivariant carrier for any
Hamiltonian. Indeed, the collection of maps from $V$ to $\PP$ is
finite. Hence, there is only a finite number of possible maps 
$\Phi_{H^{\sharp p_i}}|_V$.

\begin{Remark}
\label{rmk:finiteness}
When the ground field $\F$ is finite, a similar argument shows that
every infinite sequence of iterations contains an infinite subsequence
$k_i$ such that the carriers $\bPhi_{H^{\sharp k_i}}$ are
recapping--equivariant and all $\Phi_{H^{\sharp k_i}}$ are identically
equal to each other on $\HQ_*(M;\F)$. (It is not clear however whether
this would also be true when, for instance, $\F=\Q$.) In fact, for any
ground field $\F$, the argument applies to any finite collection of
non-zero elements in $\HQ_*(M;\F)$ in place of $V$.
\end{Remark}

Before we continue the proof, let us introduce some notation. Namely, set
$$
\bPhi_i=\bPhi_{H^{\sharp k_i}}|_L\colon L\to \bPP .
$$
These maps are one-to-one by \eqref{eq:action-ordering}.  Also note
that the maps $\Phi_i=\Phi_{H^{\sharp k_i}}|_L$ agree, due to
\eqref{eq:image}, and we denote them by $\Phi$ in what follows.

Next we claim that all fixed points in the image $\Phi(L)=\Phi(V)$
have the same augmented action.  Thus let us pick two points
$x$ and $y$ in the image. Our goal is to show that
$$
\tA_{H}(x)=\tA_H(y)
$$
or, equivalently, once arbitrary cappings of $x$ and $y$ are fixed,
that
\begin{equation}
\label{eq:action-index}
\A_{H}(\bx)-\A_H(\by)=\frac{\lambda}{2}\big(\Delta_{H}(\bx)-\Delta_H(\by)\big),
\end{equation}
where $\lambda=\lambda_0/N$ is the monotonicity constant.

\begin{Remark}
\label{rmk:important-points}
It is essential for the proof of Case (ii) that the reasoning
establishing \eqref{eq:action-index} only uses the periodicity
property \eqref{eq:periodicity} and the action ordering property
\eqref{eq:action-ordering}. Other than relying on these two results,
it is independent of the fact that $L$ is a ladder: the product
requirement \eqref{eq:product3} does not enter the argument directly,
but only via the proof of \eqref{eq:action-ordering}. Furthermore, to
prove in the non-degenerate case that all orbits in $\Phi(V)$ are
distinct we will only use \eqref{eq:degs}.
\end{Remark}

Consider the capped orbits $\bx^{k_i}$ and $\by^{k_i}$. These orbits
need not be in the image of $\bPhi_i$ unless $\nu=1$. We denote the
orbits in the image, closest to $\bx^{k_i}$ and $\by^{k_i}$, by
$\bx_i$ and $\by_i$, respectively. If the closest orbit is not unique
-- there can be two -- we pick it in an arbitrary way. (When $\nu=1$,
we have $\bx_i=\bx^{k_i}$ and $\by_i=\by^{k_i}$.) It readily follows
from the $q^\nu$-periodicity of $L$ that
\begin{equation}
\label{eq:adjustment-action}
\big|\A_{H^{\sharp k_i}}(\bx_i)-\A_{H^{\sharp k_i}}(\bx^{k_i})\big|\leq
\frac{\lambda_0\nu}{2}
\text{ and }
\big|\A_{H^{\sharp k_i}}(\by_i)-\A_{H^{\sharp k_i}}(\by^{k_i})\big|\leq
\frac{\lambda_0\nu}{2}
\end{equation}
and
\begin{equation}
\label{eq:adjustment-index}
\big|\Delta_{H^{\sharp k_i}}(\bx_i)-\Delta_{H^{\sharp k_i}}(\bx^{k_i})\big|\leq
N\nu
\text{ and }
\big|\Delta_{H^{\sharp k_i}}(\by_i)-\Delta_{H^{\sharp k_i}}(\by^{k_i})\big|\leq
N\nu .
\end{equation}
Indeed, among all cappings of $x$ or $y$ those coming from $L$ via
$\bPhi_i$ for any $i$ occur periodically at least once within any
interval of $\nu$ cappings.

Let us estimate the number $m_i$ of capped orbits in $\bPhi_i(L)$
between $\bx_i$ and $\by_i$ using the action and index orderings. Note
that the map $\bPhi_i$ and even its image (unless $\nu=1$)
depend on $i$ and so does $m_i$. However, the two orderings agree for
every $i$, and the results must be the same whether we use the index
or action ordering. Without loss of generality, we may assume that
$\A_{H}(\bx)>\A_{H}(\by)$ and hence $\A_{H^{\sharp k_i}}(\bx^{k_i})>
\A_{H^{\sharp k_i}}(\by^{k_i})$.

Every class in $V$ contributes an orbit occurring periodically in $L$.
From the action perspective, the period is $\lambda_0\nu$ by
\eqref{eq:product2}. Thus, we have
\begin{eqnarray*}
m_i &=& \frac{|\Phi(V)|}{\lambda_0\nu}
\big(\A_{H^{\sharp k_i}}(\bx_i)-\A_{H^{\sharp k_i}}(\by_i)\big)+\const\\
&=& \frac{|\Phi(V)|}{\lambda_0\nu}
\big(\A_{H^{\sharp k_i}}(\bx^{k_i})-\A_{H^{\sharp k_i}}(\by^{k_i})\big)+\const,
\end{eqnarray*}
where we use \eqref{eq:adjustment-action} to pass to the second
equality.  Henceforth $\const$ stands for a term bounded from
above and below by a constant independent of $i$, which can be fixed
throughout the entire argument; the actual value of this term is
immaterial and may vary from formula to formula or from one part of a
formula to another.

From the index perspective, the period is $2N\nu$ and we have, using
now \eqref{eq:adjustment-index},
\begin{eqnarray*}
m_i &=& \frac{|\Phi(V)|}{2N\nu}
\big(\Delta_{H^{\sharp k_i}}(\bx_i)-\Delta_{H^{\sharp k_i}}(\by_i)\big)+\const\\
&=& \frac{|\Phi(V)|}{2N\nu}
\big(\Delta_{H^{\sharp k_i}}(\bx^{k_i})-\Delta_{H^{\sharp k_i}}(\by^{k_i})\big)+\const.
\end{eqnarray*}

Equating the results, we see after a simple algebraic manipulation
that
$$
\frac{1}{\lambda_0}
\big(\A_{H^{\sharp k_i}}(\bx^{k_i})-\A_{H^{\sharp k_i}}(\by^{k_i})\big)
=\frac{1}{2N}
\big(\Delta_{H^{\sharp k_i}}(\bx^{k_i})-\Delta_{H^{\sharp
    k_i}}(\by^{k_i})\big)+\const.
$$
Finally, dividing by $k_i$ and passing to the limit as $k_i\to\infty$,
we arrive at \eqref{eq:action-index} since the action and the mean
index are homogeneous with respect to the iteration.

Finally set 
$$
x_j=\Phi(v_j)\text{ for } j=0,\ldots,\ell-1.
$$
To finish the proof, we only need to show that these orbits are
distinct, i.e., $|\Phi(V)|=\ell$. Let us cap the orbits using say
$\bPhi_1$, i.e., by setting
$\bx_j=\bPhi_1(v_j)$. 
Then by \eqref{eq:action-ordering}, we have
$$
\A_{H^{^\sharp k_1}}(\bx_0)>
\A_{H^{^\sharp k_1}} (\bx_1)>\cdots> 
\A_{H^{^\sharp k_1}} (\bx_{\ell-1})>
\A_{H^{^\sharp k_1}} (\bx_0)-\lambda_0\nu.
$$
When $\nu=1$, this immediately implies that no two orbits
$\Phi(V)$ are recappings of each
other. When $\nu>1$, we use the Conley--Zehnder index -- hence the
non-degeneracy assumption -- rather than the action to distinguish the
orbits. Namely, recall that $\MUCZ(\bx_j)=|v_j|-n$. Thus,
\eqref{eq:degs} is equivalent to
$$
\MUCZ(\bx_0)>\MUCZ(\bx_1)>\cdots > \MUCZ(\bx_{\ell-1})>\MUCZ(\bx_0)-2N,
$$
and, as a consequence, $\big|\MUCZ(\bx_j)-\MUCZ(\bx_l)\big|<2N$.
Hence, all orbits in $\Phi(V)$ are distinct.

\subsection{Case (ii)} The idea of the proof is that for the previous
argument to go through we do not need the product decomposition
\eqref{eq:product2} to hold literally. It is in fact sufficient to
have action selector carriers taking the same value (up to a capping)
on the left and right hand sides of \eqref{eq:product2} for the
sequence of iterations $k_i$. The proof shares many common elements
with the reasoning in Case (i) and below we only detail the necessary
changes.

Consider the finite collection 
$$
U=\{u, u^2, \ldots, u^d\}
$$
of non-zero elements in $\HQ_{*<2n}(M)$, where
\begin{equation}
\label{eq:|u^r|}
|u^r|=2n-(2n-|u|)r.
\end{equation}
By arguing exactly as in the proof of Case (i), it is easy to find a
sequence of prime iterations $k_i$ and a sequence of
recapping--equivariant action selector carriers $\bPhi_{H^{\sharp k_i}}$ such
that the maps $\Phi_{H^{\sharp k_i}}$ agree on $U$; cf.\ Remark
\ref{rmk:finiteness}. Let us denote the resulting map $U\to\PP$ by
$\Phi$. 

Since $|U|=d$ is sufficiently large, e.g.,
$$
|U|=\bigg\lceil \frac{2N|\PP|}{2n-|u|}\bigg\rceil+1,
$$
there exist $s_-$ and $s_+$, both in the range $[1,\, d]$, such that
$s_+>s_- +2N/(2n-|u|)$ or,
equivalently,
$$
\big| u^{s_-} \big|-\big| u^{s+}\big|> 2N
$$
and
$$
\Phi(u^{s_-})=\Phi(u^{s_+}).
$$
This is again an immediate consequence of the pigeonhole
principle. Indeed, otherwise the number of classes mapped to any point
in $\PP$ would not exceed $2N/(2n-|u|)$, and hence we would have
$|U|\leq 2N|\PP|/(2n-|u|)$.

Let, as in Example \ref{ex:Novikov}, $A$ be the generator of $\Gamma$ 
such that $I_{c_1}(A)=-2N$, i.e., $q=e^A$.
For every $k_i$, 
\begin{equation}
\label{eq:match}
\bPhi_{H^{\sharp k_i}}(u^{s_+})=
\bPhi_{H^{\sharp k_i}}(u^{s_-})\# (\nu A),
\end{equation}
where $\nu=(s_+ -s_-)(2n-|u|)/2N$ since, due to the non-degeneracy
assumption,
$$
\MUCZ\big(\bPhi_{H^{\sharp k_i}}(u^{s_+})\big)=|u^{s_+}|-n \text{ and }
\MUCZ\big(\bPhi_{H^{\sharp k_i}}(u^{s_-})\big)=|u^{s_-}|-n.
$$
In particular, $\nu$ is independent of $k_i$. (One can bypass this
reference to non-degeneracy by observing that there are only finitely
many possible values of $\nu$, as a simple mean index argument shows,
and then by passing to a subsequence of iterations.)

Let us set $\ell=\big\lfloor 2N/(2n-|u|)\big\rfloor$. Consider the finite sequence $V$
formed by $\ell$ classes
$$
\begin{array}{lllll}
v_0 &:=& u^{s_-}, & & \\
v_1 &:=& u^{s_- +1} &= &v_0*u,\\
\multicolumn{5}{c}\dotfill \\
 v_{\ell-1} & := & u^{s_- +\ell-1} &=& v_{\ell-2}*u,
\end{array}
$$
which we extend in both directions by $q^\nu$-periodicity to have
\eqref{eq:periodicity} satisfied. In other words, we set
$$ 
v_\ell=q^\nu v_0,\, v_{\ell+1}=q^\nu v_1,\,\ldots 
$$
and 
$$
v_{-1}=q^{-\nu}v_{\ell-1},\,
v_{-2}=q^{-\nu}v_{\ell-2},\, \ldots .
$$

Note that 
\begin{equation}
\label{eq:u^s+}
u^{s_+}=v_{\ell-1}*u^{s_+ -s_- -\ell+1}, \text{ where } s_+ -s_- -\ell+1>0
\end{equation}
due to our choice of $\ell$. Furthermore, for all $i$, we
have 
\begin{equation}
\label{eq:u^s+-v_l}
\s_{u^{s_+}}\big(H^{\sharp k_i}\big)=\s_{v_\ell}\big(H^{\sharp k_i}\big)
\end{equation}
by  \eqref{eq:match} and the definition of $v_{\ell}$.

The sequence $L=\{v_j\}$ is not a ladder. For
\eqref{eq:product3} need not hold for $j=\ell$.  However, we claim
that $L$ is still strictly ordered by the index and the action (for
the Hamiltonians $H^{\sharp k_i}$) and that the two orderings agree.

For the index ordering, this immediately follows from \eqref{eq:|u^r|}
and our choice of $\ell$.  To see that $L$ is strictly ordered by the
action, i.e., \eqref{eq:action-ordering} holds for $K=H^{\sharp k_i}$,
we argue as follows. For $j$ in the range from $0$ to $\ell-2$,
exactly as in Case (i) this follows from (AS9) for any Hamiltonian
with isolated fixed points. For $j=\ell-1$, we have
\begin{eqnarray*}
\s_{v_\ell}\big(H^{\sharp k_i}\big) &=& 
\s_{v_0}\big(H^{\sharp k_i}\big)-\nu\lambda_0\\
&=& 
\s_{u^{s_-}}\big(H^{\sharp k_i}\big)-\nu\lambda_0\\
&=& 
\s_{u^{s_+}}\big(H^{\sharp k_i}\big)\\
&<& \s_{v_{\ell-1}}\big(H^{\sharp k_i}\big).
\end{eqnarray*}
Here the third equality follows from \eqref{eq:match}. The ultimate
inequality is again a consequence of the Ljusternik--Schnirelman
inequality, (AS9), and \eqref{eq:u^s+} and \eqref{eq:u^s+-v_l}.
Hence, the required inequalities \eqref{eq:action-ordering} hold for
$j=0,\,\ldots,\,\ell-1$ and, by periodicity, for all $j$ and
$K=H^{\sharp k_i}$.

Finally note that \eqref{eq:degs} is satisfied by the definition of
$L$ and $\ell$.  With these observations in mind, the proof is finished
exactly in the same way in Case (i); cf.\ Remark
\ref{rmk:important-points}.

\section{Proof of Theorem \ref{thm:conley}}
\label{sec:proof-conley}

The proof follows roughly the same line of reasoning as the argument
in \cite{GG:neg-mon} with some changes in the general logic of the
proof.  Throughout this section, we will use the notation and
conventions from Sections \ref{sec:prelim} and
\ref{sec:proof-relations}.

Arguing by contradiction, assume that all periodic orbits of
$\varphi_H$ with sufficiently large period are iterated. In
particular, for every large prime $k$ the set of
contractible $k$-periodic orbits is naturally identified with the set
$\PP$ of contractible one-periodic orbits.  Then there exists a
sequence of primes $k_i\to\infty$ and a sequence of
recapping-equivariant action carriers $\bPhi_{H^{\sharp k_i}}$ such
that
$$
\Phi_{H^{\sharp k_1}}\big([M]\big)=\Phi_{H^{\sharp k_2}}\big([M]\big)=\cdots .
$$
Indeed, for a large prime $k$, the carrier $\Phi_{H^{\sharp k}}$ takes
values in $\PP$ and the assertion follows from the pigeonhole
principle. We denote the resulting fixed point by $x$. Thus, we have
$$
\Phi_{H^{\sharp k_i}}\big([M]\big)=x^{k_i}.
$$

Set
$$
\bPhi_{H^{\sharp k_i}}\big([M]\big)=\bx_{k_i}.
$$
Let us first focus on the capped orbit $\bx_{k_1}$. By Remark
\ref{rmk:carrier} and, in particular, \eqref{eq:carrier}, we have
$$
0\leq \Delta_{H^{\sharp k_1}}(\bx_{k_1})\leq 2n \text{ and }
 \A_{H^{\sharp k_1}}(\bx_{k_1})=\s_{[M]}\big(H^{\sharp k_1}\big) .
$$
We treat the cases where the mean index is zero and where it is
positive separately. These are the so-called the degenerate and weakly
non-degenerate cases of the Conley conjecture.
 
\subsubsection*{The degenerate case: $\Delta_{H^{\sharp
      k_1}}(\bx_{k_1})=0$} Note that we can, without loss of
generality, take $k_1$ arbitrarily large. More specificially, we can
assume that $k_1$ is so large that none of the Floquet multipliers,
different from one, of any one-periodic orbit $y\in \PP$ is a $k_1$-th
root of unity. In other words, using the terminology from
\cite{GG:gap}, $k_1$ is an \emph{admissible iteration} of
$\varphi_H$. Furthermore, we require $k_1$ to be large enough to
ensure that $\Delta_H(y)=0 \mod 2N$ whenever $k_1\Delta_H(y)=0\mod 2N$
for any $y\in\PP$. (Here we treat the mean index as an element of
$\R/2N\Z$, which is obviously well--defined, i.e., independent of the
capping.)

In particular, we have
$$
k_1\Delta_H(x)=\Delta_{H^{\sharp k_1}}(x^{k_1})=\Delta_{H^{\sharp k_1}}(\bx_{k_1})=0\mod 2N,
$$
and hence $\Delta_H(x)=0\mod 2N$.  As a consequence, $\Delta_H(\bx)=0$
for a suitable capping $\bx$ of $x$. With this capping, $\Delta_{H^{\sharp
    k_1}}\big(\bx^{k_1}\big)=\Delta_{H^{\sharp
    k_1}}\big(\bx_{k_1}\big) $ in $\Z$, and therefore
$\bx^{k_1}=\bx_{k_1}$, since $M$ is negative monotone.

We claim that $\bx$ is the so-called \emph{symplectically degenerate
  maximum} of $H$, i.e., $\HF_{2n}(H,\bx)\neq 0$ and
$\Delta_H(\bx)=0$; \cite {GG:gaps}. (See also
\cite{Gi:conley,GG:gap,He:irr} for the definition, a detailed
discussion and applications of this notion, which originates from
Hingston's proof of the Conley conjecture for tori; see \cite{Hi}.)
The vanishing of the mean index has already been established. On the
other hand, the local Floer homology of $\bx$ in degree $2n$ does not
vanish since
$$
\HF_{2n}(H,\bx)=\HF_{2n}\big(H^{\sharp k_1},\bx^{k_1}\big)\neq 0 .
$$
Here, the first equality is a consequence of the persistence of local
Floer homology for admissible iterations established in \cite{GG:gap}
and the second one follows from Remark~\ref{rmk:carrier}.

In the presence of a symplectically degenerate maximum, the Conley
conjecture (the existence of simple orbits with arbitrarily large
period) is proved for rational, and in particular negative-monotone,
symplectic manifolds in \cite{GG:gaps}. This concludes the proof of
Theorem \ref{thm:conley} in the degenerate case.

\subsubsection*{The weakly non-degenerate case: $\Delta_{H^{\sharp
      k_1}}(\bx_{k_1})> 0$}
Set $l_i=\lfloor k_i/k_1 \rfloor$ and let $r_i$ to be the remainder of dividing
$k_i$ by $k_1$, i.e., $l_ik_1+r_i=k_i$ and $0\leq r_i< k_1$. Define
$\nu_i$ by
\begin{equation}
\label{eq:nui}
\bx_{k_i}=\bx_{k_1}^{l_i}+\nu_i A,
\end{equation}
where $A$ is a generator of $\Gamma$.

Looking at the action values on $\bx_{k_i}$, we have,
\begin{eqnarray*}
\A_{H^{\sharp k_i}}(\bx_{k_i}) &=& \s_{[M]}\big(H^{\sharp k_i}\big) \\ 
&\leq & l_i\s_{[M]}\big(H^{\sharp k_1}\big)  +\s_{[M]}\big(H^{\sharp
  r_i}\big) \\
&\leq & l_i\s_{[M]}\big(H^{\sharp k_1}\big)  +\const,
\end{eqnarray*}
where, as in Section \ref{sec:case-i}, $\const$ stands for a constant
independent of $i$. Here, the first inequality follows from the
sub-additivity of the action selector.

On the other hand, by \eqref{eq:nui} and since the action is
homogeneous,
$$
\A_{H^{\sharp k_i}}(\bx_{k_i})=l_i \A_{H^{\sharp
    k_1}}(\bx_{k_1})+\nu_i I_\omega(A),
$$
and thus 
\begin{equation}
\label{eq:action-bound}
\nu_i I_\omega(A)\leq \const.
\end{equation}

Examining the mean indexes, we obtain in a similar vein that

$$
\Delta_{H^{\sharp k_i}}(\bx_{k_i}) =l_i \Delta_{H^{\sharp
    k_1}}(\bx_{k_1})+\nu_i I_{c_1}(A),
$$
where we used again \eqref{eq:nui} and the homogeneity of the mean
index. By the weak non-degeneracy assumption we have 
$\Delta_{H^{\sharp k_1}}(\bx_{k_1})>0$, and we conclude that
$$
\nu_i I_{c_1}(A)\to -\infty
$$ 
as $k_i\to\infty$. Therefore, since $M$ is negative monotone,
$$
\nu_i I_{\omega}(A)\to \infty,
$$ 
which is impossible due to \eqref{eq:action-bound}. This contradiction
completes the proof of the theorem.

\end{document}